\numberwithin{equation}{section}
\newtheorem{thm}{Theorem}[section]
\newtheorem{cor}[thm]{Corollary}
\newtheorem{lem}[thm]{Lemma}
\newtheorem{prop}[thm]{Proposition}
\begin{document}

\title{Cutoff for random walks on graphs with  bottlenecks.}
\author{Ioannis Papageorgiou}
\thanks{\textit{Address:} Neuromat, Instituto de Matematica e Estatistica,
 Universidade de Sao Paulo, 
 rua do Matao 1010,
 Cidade Universitaria, 
 Sao Paulo - SP-  Brasil - CEP 05508-090. 
   \\
 This article was produced as part of the activities of FAPESP  Research, Innovation and Dissemination Center for Neuromathematics (grant 2013/ 07699-0 , S. Paulo Research Foundation); This article  is supported by FAPESP grant   (2017/15587-8)
\\ \text{\  \   \      } \textit{Email:} papyannis@yahoo.com ,  ipapageo@ime.usp.br}
\keywords{cutoff, mixing time.}
\subjclass[2000]{60J10}
\begin{abstract}
We examine the mixing time  for random walks on graphs. In particular we are interested on investigating graphs with bottlenecks. Furthermore,  the cutoff  phenomenon is examined.
\end{abstract}
\maketitle

\section{Introduction.}

Assume $X$ is an irreducible  aperiodic Markov chain on some finite state space. Consider $P$ to be the transition matrix and $\pi$ the stationary distribution. At first define the total variation distance between two measures $\mu$ and $\nu$ to be
$$\vert \vert \mu-\nu\vert \vert = \sup _A\vert\mu(A)-\nu(A)\vert.$$
Then for every $\epsilon>0$ we define the $\epsilon$-total variation mixing time as
$$t_{mix}(\epsilon) = \min \{t\geq 0 : \max_x \vert \vert P^t(x,\dot ) - \pi \vert \vert \leq \epsilon \}.$$
The purpose of the current paper is to calculate upper and lower bounds for the mixing time of an irreducible Markov chain on graphs with bottlenecks. We aim in extending the example presented by Peres and Sousi in \cite{P-S}  to a general result that will include cases not covered yet. In particular, we are interested in determining conditions under which the upper and lower bounds of the mixing time are asymptotically equal.

\begin{figure}
\begin{center}
\includegraphics[width=12.25cm,height=5.25cm]{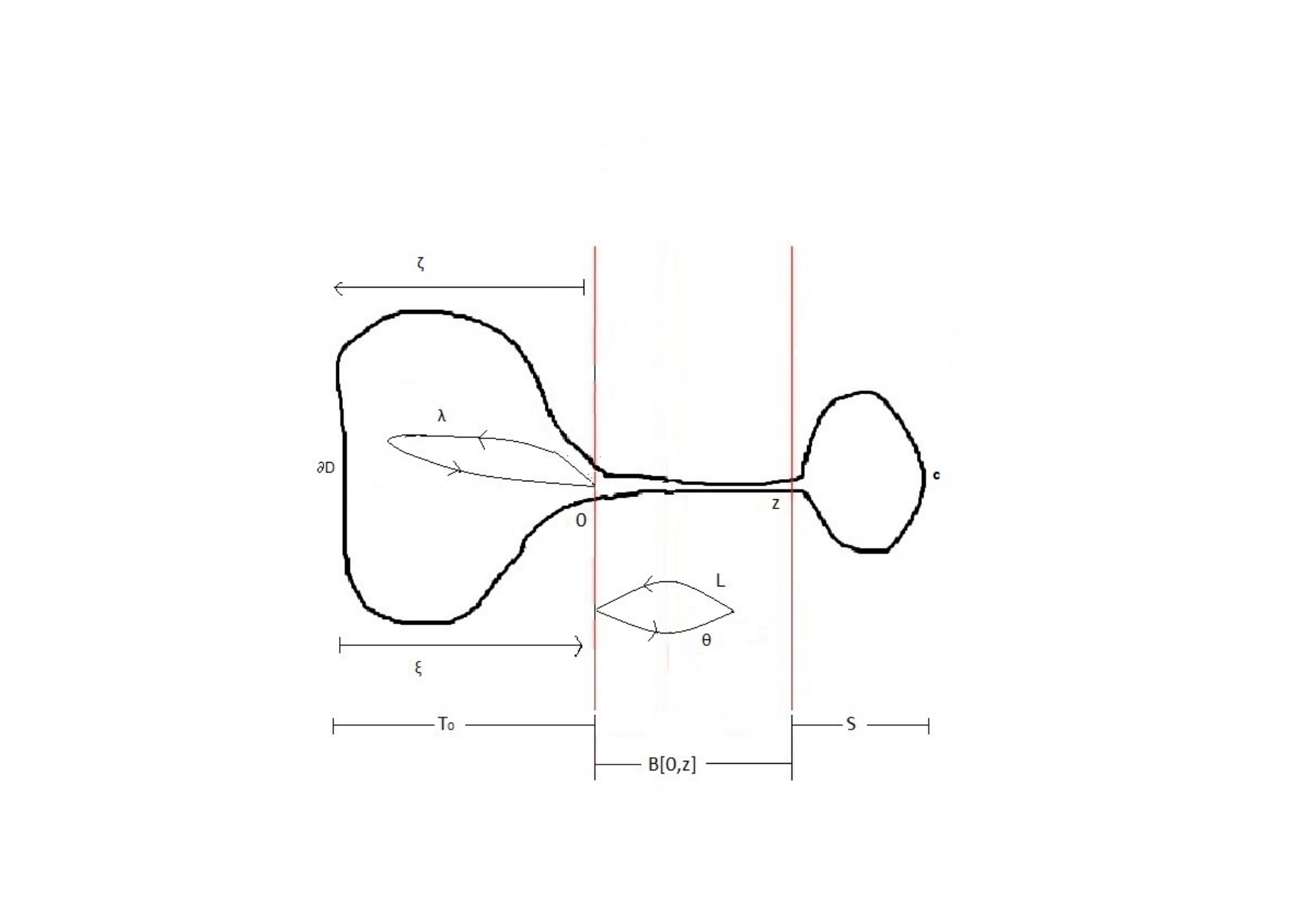}
\caption[Case A]{Case A.}
\label{fig1}
\end{center}
\end{figure}
 Furthermore, some outcomes in relation to the cutoff phenomena are obtained. 

We say that the sequence of chains $X^n $ exhibits total  variation cutoff if for every $0<\epsilon<1$
\begin{align}\label{eqcutoff}\lim_{n \rightarrow \infty}\frac{t_{mix}^n (\epsilon)}{t_{mix}^n (1-\epsilon)}=1.\end{align}
In \cite{P-S}  the first example of a tree was  constructed which exhibits total variation cutoff. The construction of the tree was based on placing a binary tree consisting of $N=n_k^3$ vertices at the origin of a line of $n_k$ points, where $n_k=2^{2^k}$. Then for every $j\in \{[k/ 2],...,k\}$ a  binary tree $T_j$ consisting of $N/n_j$ vertices was placed at distance $n_j$ from the origin.
The purpose of the current paper is to generalise this result. We will consider two cases, one general case referred to as Case A (see figure 1) where we will consider two graphs connected with a bottleneck and another one  referred to as Case B (see figure 2), where  we will   substitute the trees $T_j$ in \cite{P-S} by finite graphs $T_j$ in such a way that a bottleneck is observed between the $T_j$ 's. 
\begin{figure}
\begin{center}
\includegraphics[width=12.25cm,height=5.25cm]{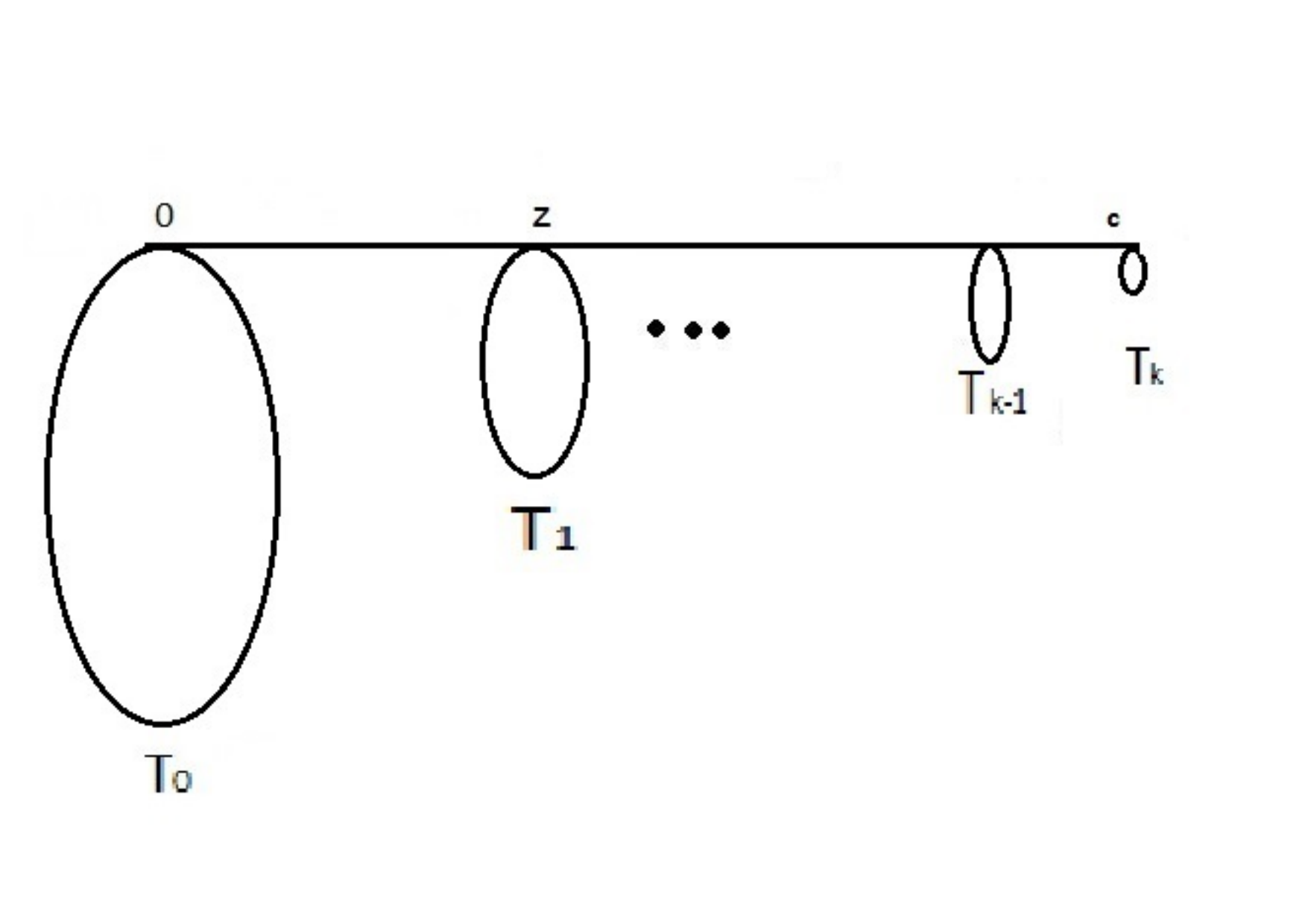}
\caption[Case B]{Case B.}
\label{fig2}
\end{center}
\end{figure}
We will denote by $B[0,z]$  the bottleneck of the graph, the part of the graph between $0$ and $z$, that is the part of the graph between $T_0$ and $S$. All components of the graph, $ T_i, S_i, B[0,z], S$   increase in size as $k$ increases. We will denote  $D=T_0 \cup B[0,z]$ while  the complement of $D$ will be denoted as $S=D^c$. Furthermore, we will denote   $\partial D$  the points of $T_0$ that are further from $0$, i.e. $\partial D:=\left\{  x\in T_0 \text{\ : \ }d(x, 0)=\max_{x\in T_0}d(x, 0)\right\}$, where $d(x,y)$ the graph distance between to points $x,y$ of the graph. In  Case A we will denote $c$ the right end point  of $S$, while in case B, $c$ will be the connecting point between $T_k$ and the rest of the graph. We will consider the size of the bottleneck and $S$ to be relatively small compared with how the size of $T_0$ increases, in such a way that $$(C1) \ \ \ \  \  \  \pi (B[0,z]\cup S)\rightarrow 0 \text{ \ as \ } k\rightarrow \infty. $$

  We define $q$ to be  the  probability at  every return to $T_0$ the random walk to exit  $T_0$ without hitting $\partial D$.    Then,  $p=1-q$, is  the  probability at  every return to $T_0$, the random walk to hit  $\partial D$ of $T_0$ before   exiting $T_0$.   For instance, if $T_0$ is a binary tree, as is the case examined  in \cite{P-S}, $q=1-p\geq \frac{1}{3}$.
 
    By  $\tau_x$ we define the first time we reach $x$ 
$$\tau_x=inf\{t:x_t=x\} .$$ 
Thus, $\tau_z$ denotes the first time we reach the  boundary between $S$ and $D$.  Furthermore, we define  
$$\tau^*_x=inf\{s\geq \tau_0:x_s=x\}. $$
 Through  out this paper we will   use the symbol $\prec$ to denote  domination, i.e. we will write $$A\prec B \Leftrightarrow\lim_{k\rightarrow +\infty} \frac{ A}{B } =0 $$ Furthermore, we will write
$$A\asymp B\Leftrightarrow \ \ni\ \text{constant}\ c \ \text{s.t.} \ A=cB$$
and 
$$A\lesssim\ B\Leftrightarrow \ \ni\ \text{constant}\ c \ \text{s.t.} \ A\leq cB.$$

We will now look at  the main conditions   and results of the paper. We focus on the two distinct  conditions (H1) and (H2) that result in the two theorems of the paper. The remaining set of conditions labelled as (H) and (C) is common in the two theorems.   The main condition states that for some   $\gamma$ and $\delta$ increasing on $k$  the following   holds  
 \begin{align*}(H1) \  \  \   \   \  \ \mathbb{E}_{ c}(\tau_0)+\frac{\gamma}{2}\sqrt{Var_{ c}(\tau_0)}\geq\mathbb{E}_{ \partial D}(\tau_0)+\delta\ \sqrt{Var_{ \partial D}(\tau_0)} \end{align*}
for large $k$, with the growth of $\gamma$ being   constrained  by $$ (H)   (a)\  \    \gamma\sqrt{Var_{ c}(\tau_0)} \prec\mathbb{E}_{ c}(\tau_0).$$  Then we show that $t_{mix}(\epsilon)\sim \mathbb{E}_{ c}(\tau_0)$. This is the results presented in   Theorem \ref{theorem1}. In this category belongs the example presented in \cite{P-S}, as explained in section \ref{secpar1}. Then we look on graphs that (H1) does not hold. It appears that with some additional conditions  the statement of the theorem derived under (H1) still holds true. Assume that the opposite inequality of (H1) holds, that is:   
 \begin{align*}  \mathbb{E}_{ c}(\tau_0)+\frac{\gamma}{2}\sqrt{Var_{ c}(\tau_0)}\leq\mathbb{E}_{ \partial D}(\tau_0)+\delta\ \sqrt{Var_{ \partial D}(\tau_0)} \end{align*}
for large $k$.  We will determine conditions so that an asymptotic estimate of the mixing time can be obtained in this case as well. Denote $t_S=\mathbb{E}_{ c}(\tau_0)-\gamma\sqrt{Var_{ c}(\tau_0)}$, and consider some $\epsilon \in (0,1)$. Assume that for $k$ large,  $\pi(S)+\sqrt{\Phi}<1-\epsilon$ for  $\Phi$ defined as 
\begin{align}\label{defPhi}\Phi=\sum_{x \in S } \frac{\pi(x)}{\pi(S)}P_{x} (t_{S}>\tau_0), \end{align} 
a function that decreases to zero as $k\rightarrow \infty$.
 If  in addition  $D$ and $S$ are such that the following  inequality  (H2) holds for large $k$
\begin{align*}  \frac{\mathbb{E}_{ c}(\tau_0)+\frac{\gamma}{2}\sqrt{Var_{ c}(\tau_0)}}{\sqrt{\Phi}} \geq\mathbb{E}_{ \partial D}(\tau_0)+\delta\ \sqrt{Var_{ \partial D}(\tau_0)} \geq \mathbb{E}_{ c}(\tau_0)+\frac{\gamma}{2}\sqrt{Var_{ c}(\tau_0)},\end{align*}
 then $t_{mix}(\epsilon) \sim \frac{\mathbb{E}_{ c}(\tau_0) }{\sqrt{\Phi}}$. This will be the subject of Theorem \ref{theorem0}.  One should notice that the right inequality of (H2) is nothing else than the inverse of (H1).  Furthermore,  $\sqrt{\Phi}$ is not a constant but  a decreasing function such that $\sqrt{\Phi}\downarrow 0$ as $k\rightarrow \infty$. Specific examples are presented in   sections  \ref{secpar2} and \ref{secpar3}. In this way we can even consider examples that go far from (H1), such that 
$$\frac{\mathbb{E}_{ c}(\tau_0)+\frac{\gamma}{2}\sqrt{Var_{ c}(\tau_0)}}{\mathbb{E}_{ \partial D}(\tau_0)+\delta\ \sqrt{Var_{ \partial D}(\tau_0)}} \rightarrow0 \  \  \ \text{as} \ \ k\rightarrow \infty .$$
  For both cases investigated in theorems \ref{theorem1}
and   \ref{theorem0} cutoff properties are proven in corollaries \ref{cor1} and \ref{cor2}.   

Some further assumptions on the graph. 
 In relation to the bottleneck  we also define 
\begin{align}\label{defofL}L_z=\sum_{s=\tau_0}^{\tau^*_{z}}\mathcal{I}(x_{s-1}\notin T_0,x_s \in T_0)\end{align}  the number  of returns  to $T_{0}$ in the time interval $[\tau_0,\tau^*_{z}]$. We also assume that the part of the graph between $0$ and $z$ is such that there exists an increasing function $h$, $\lim_{x\rightarrow \infty}h(x)= \infty$, such that \begin{align*} 
(C2) \  \  \   \  \  \  \  \  \ P(L_z\leq z)\leq\frac{1}{  h(z)}.
\end{align*} 
For the analogue  of the example in \cite{P-S} where the bottle neck is a line, $h(x)=x$.

We define  $(\theta_{i})$ to be iid variables distributed as the length of a random walk on $B[0,z]$ starting from $0$ conditioning not to hit $z$ and $(\lambda_{i,j})$ to be iid random variables distributed as the length of a random walk from $0$ on $T_0$ conditioning not to hit $\partial T_0$, the edge of $T_0$. For $n_i$ the point where $T_i$ connects with the rest of the graph, we denote for  every $i\geq 1$,   $\partial T_i:=\left\{  x\in T_i \text{\ : \ }d(x, n_i)=\max_{x\in T_i}d(x, n_i)\right\}$, that is the points in $T_i$ that lie at the biggest distance far from $n_i$, and so the rest of the graph  $T_i^c$.  We  will also  assume  \begin{align*} (C3)  \   \   \   \   \  \   \mathbb{E}_0(\tau_{ \partial D})&\leq \mathbb{E}_{ \partial D}(\tau_0) \\    \mathbb{E}_{ \partial T_i}(n_i)&\leq \mathbb{E}_{ \partial D}(\tau_0) \ \  \forall i \end{align*}
for large $k$.  Conditions  (C3) are reasonable enough since they  roughly state that the time needed to traverse  the big  set of vertices with the complex structure $T_0$ is bigger than  the smaller $T_i$'s, as well as, that when in $T_0$, the random walk moves with bigger probability towards the left end side $\partial D$ than towards the bottleneck. The main theorem about the  mixing time follows.
   
 \begin{thm} \label{theorem1} Assume conditions (C) and     that for large $k$ there exist $\gamma$ and $\delta$ such that 
\begin{align}\nonumber (H)   (a)\  \    \gamma\sqrt{Var_{ c}(\tau_0)} \prec\mathbb{E}_{ c}(\tau_0)\text{    \  and \    }(b)  \  \mathbb{E}_{ \partial D}(\tau_0)+\mathbb{E}[ L]  \mathbb{E}[\lambda_{i,j}+\theta_{i}]\prec\gamma\sqrt{Var_{ c}(\tau_0)}\end{align}
and  \begin{align*}(H1) \  \  \   \   \  \ \mathbb{E}_{ c}(\tau_0)+\frac{\gamma}{2}\sqrt{Var_{ c}(\tau_0)}\geq\mathbb{E}_{ \partial D}(\tau_0)+\delta\ \sqrt{Var_{ \partial D}(\tau_0)} \end{align*}
hold.
Then for   every $0<\epsilon < 1$ the mixing time for the random walk on the graph is $$t^k_{mix}(\epsilon)\sim \mathbb{E}_{ c}(\tau_0) $$ \end{thm}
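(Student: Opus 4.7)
The strategy is to prove matching upper and lower bounds of the form $\mathbb{E}_c(\tau_0)(1+o(1))$, using (H)(a) to absorb the correction $\gamma\sqrt{Var_c(\tau_0)}$ into the leading term. The lower bound exploits that a walk starting at the remote vertex $c\in S$ needs time of order $\mathbb{E}_c(\tau_0)$ to enter $T_0$, which by (C1) carries almost all the stationary mass. The upper bound shows that shortly after $\tau_0$ the walk has already equilibrated inside $T_0$, and hence is close to $\pi$ in total variation.

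For the lower bound, I would start at $c$ and set $t=\mathbb{E}_c(\tau_0)-\tfrac{\gamma}{2}\sqrt{Var_c(\tau_0)}$. Chebyshev yields $P_c(\tau_0>t)\geq 1-4/\gamma^2\to 1$ since $\gamma$ is increasing in $k$. Because $S$ accesses $T_0$ only through the bottleneck endpoint $0$, the event $\{\tau_0>t\}$ forces $X_t\in S\cup B[0,z]$, so $P_c(X_t\in T_0)\to 0$ while $\pi(T_0)\to 1$ by (C1). Testing the total-variation distance on the set $T_0$ yields $\|P^t(c,\cdot)-\pi\|_{TV}\to 1$, whence $t_{mix}(\epsilon)\geq t=\mathbb{E}_c(\tau_0)(1-o(1))$ by (H)(a).

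For the upper bound I would take $t=\mathbb{E}_c(\tau_0)+\gamma\sqrt{Var_c(\tau_0)}$, again $\mathbb{E}_c(\tau_0)(1+o(1))$, and argue from the worst starting point, essentially $c$. Chebyshev gives $\tau_0\leq\mathbb{E}_c(\tau_0)+\tfrac{\gamma}{2}\sqrt{Var_c(\tau_0)}$ with probability at least $1-4/\gamma^2$, leaving a residual budget of $\tfrac{\gamma}{2}\sqrt{Var_c(\tau_0)}$ for the walk to equilibrate inside $T_0$. The natural time scale for this internal mixing is $\mathbb{E}_{\partial D}(\tau_0)+\delta\sqrt{Var_{\partial D}(\tau_0)}$; (H1) bounds it above by $\mathbb{E}_c(\tau_0)+\tfrac{\gamma}{2}\sqrt{Var_c(\tau_0)}$, and (H)(b) strengthens this by forcing its sum with the total excursion cost $\mathbb{E}[L]\mathbb{E}[\lambda_{i,j}+\theta_i]$ to be $\prec\gamma\sqrt{Var_c(\tau_0)}$, so the budget suffices. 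Writing $\pi_{T_0}$ for $\pi$ restricted and renormalized to $T_0$, the triangle inequality $\|P^t(c,\cdot)-\pi\|_{TV}\leq\|P^t(c,\cdot)-\pi_{T_0}\|_{TV}+\|\pi_{T_0}-\pi\|_{TV}$ then closes the argument: the second term is $\pi(S\cup B[0,z])\to 0$ by (C1) and the first is $o(1)$ by the internal mixing argument above.

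The main obstacle will be implementing the $T_0$-mixing claim rigorously in the presence of excursions through the bottleneck. My plan is to couple the $c$-started chain, from time $\tau_0$ onward, with a reference chain at stationarity on $T_0$, controlling the coupling time by using (C3) to bound the duration of excursions into the small pieces $T_i$ or into $B[0,z]$ by $\mathbb{E}_{\partial D}(\tau_0)$, and the tail estimate (C2) on $L$ to cap how many such excursions can accumulate before the deadline $t$. The combined excursion-plus-mixing cost is then dominated by $\mathbb{E}_{\partial D}(\tau_0)+\mathbb{E}[L]\mathbb{E}[\lambda_{i,j}+\theta_i]$, which (H)(b) makes negligible compared with $\gamma\sqrt{Var_c(\tau_0)}$, so the coupling succeeds within the residual budget and the upper bound follows.
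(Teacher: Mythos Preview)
Your proposal is essentially the paper's proof: a Chebyshev lower bound from $c$ tested against the set $T_0$ (Lemma~\ref{palLem}), and a coupling upper bound at the time $t=\mathbb{E}_c(\tau_0)+\gamma\sqrt{Var_c(\tau_0)}$ in which one first waits for $\tau_0$ and then couples inside $T_0$, with the post-$\tau_0$ cost controlled by $\mathbb{E}_{\partial D}(\tau_0)+\mathbb{E}[L]\mathbb{E}[\lambda_{i,j}+\theta_i]$ and (H)(b) via Markov (Proposition~\ref{prop3.2} with $A=1$, using Lemma~\ref{neolem}).

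One clarification is worth making. You invoke (H1) as a bound on the ``internal mixing time'' after $\tau_0$, but in the paper (H1) plays a different and more essential role: it guarantees that the hitting time $\tau_0$ is uniformly controlled over \emph{all} starting points, in particular over $x\in T_0$. Since $t_{mix}(\epsilon)$ is a maximum over $x$, you cannot simply declare that $c$ is ``essentially'' the worst starting point; without (H1) the point $\partial D$ could require longer to reach $0$ than $c$ does. Concretely, the paper uses (H1) in Lemma~\ref{neolem} to get $P_{\partial D}(\tau_0\ge t')\le P_{\partial D}\bigl(\tau_0\ge \mathbb{E}_{\partial D}(\tau_0)+\delta\sqrt{Var_{\partial D}(\tau_0)}\bigr)\le 1/\delta^2$, which is exactly what makes $c$ the dominant case. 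The post-$\tau_0$ coupling time is then handled entirely by (H)(b) and Markov's inequality, not by (H1). With that reassignment of roles your outline matches the paper's argument.
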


 In relation then to the  cutoff phenomenon, we obtain the following corollary. 
 \begin{cor}\label{cor1}Under the conditions of Theorem \ref{theorem1} the random walk on the graph exhibits variation cutoff, that is for every $0<\epsilon<1$
\begin{align*}\lim_{k \rightarrow \infty}\frac{t_{mix}^k (\epsilon)}{t_{mix}^k (1-\epsilon)}=1.\end{align*}
\end{cor}  

For an example   that    satisfies the conditions of Theorem \ref{theorem1} one can look on graphs similar to the one presented in \cite{P-S} as described in section \ref{secpar1}.

In the case where  condition (H1) is not satisfied then an analogue result still holds as presented on the next theorem. First,  recall that because of (C1), $\lim_{k \rightarrow \infty }\pi(S)=0$.
\begin{thm} \label{theorem0}   Assume  that $\Phi= \sum_{x \in S } \frac{\pi(x)}{\pi(S)}P_{x} (t_{S}>\tau_0)\downarrow 0$ as $k \rightarrow \infty$.  For $\epsilon \in (0,1)$, assume  $k$ large enough so that   $\pi(S)+\sqrt{\Phi}<1-\epsilon$.  Assume  conditions (C) and     that for large $k$ there exist $\gamma$ and $\delta$ such that 
\begin{align}\nonumber(H) \  (a)\ \       \gamma\sqrt{Var_{ c}(\tau_0)} \prec\mathbb{E}_{ c}(\tau_0)\text{   \    and \    }(b)  \  \mathbb{E}_{ \partial D}(\tau_0)+\mathbb{E}[ L]  \mathbb{E}[\lambda_{i,j}+\theta_i])\prec\frac{\gamma\sqrt{Var_{ c}(\tau_0)}}{\sqrt{\Phi}}\end{align} and
\begin{align*}(H2) \  \  \   \   \  \  \frac{\mathbb{E}_{ c}(\tau_0)+\frac{\gamma}{2}\sqrt{Var_{ c}(\tau_0)}}{\sqrt{\Phi}} \geq \mathbb{E}_{ \partial D}(\tau_0)+\delta\ \sqrt{Var_{ \partial D}(\tau_0)} \geq   \mathbb{E}_{ c}(\tau_0)+\frac{\gamma}{2}\sqrt{Var_{ c}(\tau_0)}.\end{align*} 
Then, 
\begin{align*} t^k_{mix}(\epsilon) \sim \frac{\mathbb{E}_{ c}(\tau_0) }{\sqrt{\Phi}}.\end{align*}
 \end{thm}
In sections \ref{secpar2} and \ref{secpar3} two examples  that satisfy the conditions of Theorem \ref{theorem0} are presented.  As an outcome of the theorem, we obtain the following corollary about variation cutoff.  
\begin{cor}\label{cor2}Under the conditions of Theorem \ref{theorem0} the random walk $X^n$ exhibits variation cutoff, that is, for every $0<\epsilon<1$ 
\begin{align*}\lim_{k \rightarrow \infty}\frac{t_{mix}^k (\epsilon)}{t_{mix}^k (1-\epsilon)}=1.\end{align*}
\end{cor}  

  From Theorem \ref{theorem0} we can obtain the following weaker version, which will be used to show  the examples of section \ref{sec4}.
\begin{thm} \label{theorem0b} Assume  that $\Phi= \sum_{x \in S } \frac{\pi(x)}{\pi(S)}P_{x} (t_{S}>\tau_0)\downarrow 0$ as $k \rightarrow \infty$.  For $\epsilon \in (0,1)$, assume  $k$ large enough so that   $\pi(S)+\sqrt{\Phi}<1-\epsilon$.  Assume  conditions (C) and     that for large $k$ there  exist $\gamma$ and $\delta$ such that    (H) and 
$$\mathbb{E}_{ \partial D}(\tau_0)\prec\delta\ \sqrt{Var_{ \partial D}(\tau_0)} $$    and
that for $k$ large$$ \mathbb{E}_{ c}(\tau_0)\leq\delta\  \sqrt{Var_{ \partial D}(\tau_0)}\leq \frac{\mathbb{E}_{ c}(\tau_0) }{\sqrt{\Phi}}.$$
Then
\begin{align*} t_{mix}(\epsilon) \sim\frac{\mathbb{E}_{ c}(\tau_0) }{\sqrt{\Phi}}.\end{align*}
 \end{thm}
A few words about the proof of the two theorems and the structure of the paper. In order to find the asymptotic limits of the mixing time $t^k_{mix}$  we will calculate   upper and lower bounds for $t^k_{mix}(\epsilon)$. Under the conditions  of Theorem  \ref{theorem1}, for $0<\epsilon<1$ we will    show that for large $k$ 
\begin{align}\label{loup1}  \mathbb{E}_{ c}(\tau_0)-\gamma\sqrt{Var_{ c}(\tau_0)}\leq t^k_{mix}(\epsilon)\leq\mathbb{E}_{ c}(\tau_0)+\gamma\sqrt{Var_{ c}(\tau_0)} . \end{align}
Then the mixing time and the cutoff property follow because of (H)(a).

Under the conditions  of Theorem  \ref{theorem0}, we will show that for $k$ large enough so that  $\pi(S)+\sqrt{\Phi}\leq 1-\epsilon$
 \begin{align}\label{loup2}  \frac{\mathbb{E}_{ c}(\tau_0)-\gamma\sqrt{Var_{ c}(\tau_0)}}{\sqrt{\Phi}}\leq t^k_{mix}(\epsilon)\leq\frac{\mathbb{E}_{ c}(\tau_0)+\gamma\sqrt{Var_{ c}(\tau_0)}}{\sqrt{\Phi}}  \end{align}
when $k$ is large. Then the mixing time and the cutoff property follow again because of (H)(a).  

The lower bound of both (\ref{loup1}) and (\ref{loup2}) will be shown in   Lemma \ref{palLem} and Proposition \ref{newlower} of section 2 respectively. The upper bound  of (\ref{loup1}) and (\ref{loup2}) follows from Proposition \ref{prop3.2} for $A=1$ and $A=\sqrt{\Phi}<1$ respectively. 

\section{Lower bounds.} \label{lower}
In this section we present two  lower bounds for the mixing time $t^k_{mix}$.  The first lower bound is presented in   Lemma \ref{palLem}. Then under the condition  $\Phi=\sum_{x \in S } \frac{\pi(x)}{\pi(S)}P_{x} (t_{S}>\tau_0)<1$ for large $k$, we prove in Proposition \ref{newlower} a sharper lower bound.    The first lower bound for the mixing time follows.
\begin{lem}\label{palLem}Assume (C) and $\gamma\rightarrow \infty$ as $k\rightarrow \infty$. The following lower bound for the mixing time holds:
$$t_{mix}(\epsilon)\geq\mathbb{E}_{ c}(\tau_0)-\gamma\sqrt{Var_{ c}(\tau_0)} $$
for large $k$.
\end{lem}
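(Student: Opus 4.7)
The plan is to run a standard distinguishing-set lower bound based at the extremal vertex $c$, exploiting the fact that the bottleneck forces a walk starting in $S$ to pass through $0$ before it can enter the bulk set $T_0$ where nearly all of the stationary mass is concentrated.

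Set $t^{\star}=\mathbb{E}_{c}(\tau_0)-\gamma\sqrt{Var_{c}(\tau_0)}$. The first step is to control $P_{c}(\tau_0\leq t^{\star})$ via a one-sided Chebyshev (Cantelli) inequality applied to $\tau_0$ under $P_{c}$:
$$P_{c}(\tau_0\leq t^{\star})=P_{c}\bigl(\tau_0-\mathbb{E}_{c}(\tau_0)\leq -\gamma\sqrt{Var_{c}(\tau_0)}\bigr)\leq \frac{1}{1+\gamma^{2}}.$$
By hypothesis (H)(a) the parameter $\gamma$ tends to infinity with $k$, so this probability is $o(1)$.

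Next, I would pick the distinguishing set $A=T_0\setminus\{0\}$ (or any fixed subset of $T_0$ disjoint from $0$). Because of the bottleneck structure, any trajectory starting from $c\in S$ must visit $0$ before it can enter $A$; consequently $\{x_{t^{\star}}\in A\}\subseteq\{\tau_0\leq t^{\star}\}$ and therefore
$$P^{t^{\star}}_{c}(A)\leq P_{c}(\tau_0\leq t^{\star})\leq \frac{1}{1+\gamma^{2}}.$$
On the other hand, condition (C1) gives $\pi(A)\geq 1-\pi(B[0,z]\cup S)\to 1$ as $k\to\infty$. Combining these two estimates,
$$\|P^{t^{\star}}_{c}-\pi\|_{TV}\geq \pi(A)-P^{t^{\star}}_{c}(A)\geq 1-\pi(B[0,z]\cup S)-\frac{1}{1+\gamma^{2}},$$
which tends to $1$. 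In particular, for every fixed $0<\epsilon<1$ and all sufficiently large $k$ the right-hand side exceeds $\epsilon$, so $t_{mix}(\epsilon)>t^{\star}$, which is the desired inequality (in the asymptotic sense $\asymp$ used throughout the paper).

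The only subtle point (and the principal obstacle) is the geometric claim that from $c$ the walk really must hit $0$ before $T_0\setminus\{0\}$: this needs to be read directly off the graph topology described around Figures 1--2, namely that $S$ and the interior of $T_0$ are separated by the single cut-vertex $0$ at the mouth of the bottleneck $B[0,z]$. Once this separation is in place, everything else reduces to the Chebyshev bound on $\tau_0$ together with (C1); no properties of $T_0$, of the bottleneck, or of the excursion structure measured by $L$ and the $\theta_i,\lambda_{i,j}$ are needed at this lower-bound stage (these will enter only in the upper bound).
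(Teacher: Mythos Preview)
Your argument is correct and is exactly the standard distinguishing-set lower bound one expects here. Note that the paper does not actually give a proof of this lemma: immediately after the statement it writes ``The proof of Lemma~\ref{palLem} is presented in [P-S]'' and moves on to use the bound as a black box. Your Chebyshev/Cantelli estimate on $P_{c}(\tau_0\le t^{\star})$ combined with the distinguishing set $A\subset T_0$ and condition (C1) is precisely the mechanism behind the [P-S] argument, so there is no meaningful divergence in approach. The one cosmetic point is that your inequality holds for all sufficiently large $k$ (once $1-\pi(B[0,z]\cup S)-\tfrac{1}{1+\gamma^2}>\epsilon$), which is the intended reading throughout the paper; also $\pi(A)=1-\pi(B[0,z]\cup S)-\pi(\{0\})$, but the single vertex $0$ has vanishing mass and does not affect the conclusion.
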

The proof of   Lemma \ref{palLem}  is presented in \cite{P-S} (lower bound of Theorem 1.1 \cite{P-S}). We will use this bound in order to show a sharper lower bound on the following proposition.
 
Denote $\pi_S$ the restriction of $\pi$ to $S$, $\pi_S(A)=\pi(S\cap A)$ and $\mu_S (A)=\frac{\pi_S(A)}{\pi (S)}$.
In order to prove the second sharper lower bound we will use the approach  of \cite{L-P-W} for graphs  with one bottleneck.
 The main result  related  to the lower bound of the mixing time follows in the next proposition.
\begin{prop}\label{newlower} For $\epsilon \in (0,1)$, assume $k$ large enough so that $\sqrt{\Phi}+\pi(S)<1-\epsilon$. Then for $k$ large
\begin{align*} t^k_{mix}(\epsilon) \geq\frac{\mathbb{E}_{ c}(\tau_0)-\gamma\sqrt{Var_{ c}(\tau_0)}}{\sqrt{\Phi}}.\end{align*}
  \end{prop}
 \begin{proof}
  Since from Lemma \ref{palLem} we know that  for $k$ large
$t^k_{mix}\geq\mathbb{E}_{ c}(\tau_0)-\gamma\sqrt{Var_{ c}(\tau_0)} =t_S$, there exists an $m\geq 1$ such that $t^k_{mix(\epsilon )}=mt_S$.  In order to bound the total variation distance   $ \parallel \mu_SP^{mt_S}-\mu_S\parallel_{TV}$
we can use the following bound 
 \begin{align*} \parallel \mu_SP^{t}-\mu_S\parallel_{TV}  \leq  t \Phi(S), \ \ \ t \in \mathbb{N} \end{align*}
 where $\Phi(S)=\sum_{x\in S}\sum_{y \notin S}\mu_S (x)P(x,y),$ (see   (7.14)-(7.15) from  \cite{L-P-W}). We then obtain
    \begin{align}\label{eq1.12n} \parallel \mu_SP^{t^k_{mix}}-\mu_S\parallel_{TV}  \leq   t^k_{mix}\Phi(S)=m t_S \Phi(S) =m\Phi\ \end{align}
    where we have denoted $\Phi= t_S \Phi(S)$. We have
\begin{align*} \parallel \mu_S -\pi\parallel_{TV} =\max_{A\subset\Omega}\left\vert\mu_S (A)-\pi(A)\right\vert\geq \left\vert\mu_S (S^{c})-\pi(S^{c})\right\vert =\pi(S^{c})  \end{align*}
since $\mu_S (S^{c})=0$. So, using the triangular inequality,  we can write
\begin{align*} 1-\pi(S)\leq\parallel \mu_S -\pi\parallel_{TV} \leq\parallel \mu_SP^{t^k_{mix} }-\mu_S \parallel_{TV}+\parallel \mu_SP^{t^k_{mix} }-\pi\ \parallel_{TV}  \end{align*}
which leads to
$$1-\pi(S)\leq \parallel \mu_SP^{t_{mix} }-\mu_S \parallel_{TV}+\epsilon$$ If we bound the first term on the right hand 
by (\ref{eq1.12n}) we then have
\begin{align} 1-\pi(S)\leq m \Phi +\epsilon \label{eq1.12.1} \end{align}
which after  substituting $m=\frac{t^k_{mix(\epsilon )}}{t_{S}}$  gives the  following lower bound for $t^k_{mix(\epsilon )}$
\begin{align*} t^k_{mix(\epsilon )} \geq\frac{ t_S}{ \frac{\Phi}{1-\pi(S)-\epsilon}} .\end{align*}If we choose $k$ large enough so that \[\sqrt{\Phi}+\pi (S)+\epsilon \leq 1 \ \Rightarrow \ \frac{\Phi}{1-\pi(S)-\epsilon}\leq \sqrt{\Phi}\]
we then obtain the desired lower bound
\begin{align*} t^k_{mix(\epsilon )} \geq\frac{ t_S}{ \sqrt{\Phi}} .\end{align*}
\end{proof}
\section{Upper bound for the  cutoff case.} \label{upperC}

 In this section we  calculate the upper bound for the mixing time. Technics from \cite{P-S} will be   closely followed.
 We start with a technical lemma.
  \begin{lem}\label{neolem}Assume (C2) and  $\pi(S)+\sqrt{\Phi}<1-\epsilon$ for large $k$. Denote $t'=\frac{\mathbb{E}_{ c}(\tau_0)+\frac{\gamma}{2}\sqrt{Var_{ c}(\tau_0)}}{\sqrt{\Phi}}$, and define the set   $B=\{\tau_0<t'\}$. If   \begin{align*}(\star) \ \ \ \ \ t' \geq\mathbb{E}_{ \partial D}(\tau_0)+\delta \sqrt{Var_{ \partial D}(\tau_0)}\end{align*}   then
\begin{align}\label{setB}P_{x}( B^{c})\leq \frac{16}{\gamma^2}+\frac{1}{\delta^2}+\frac{4\mathbb{E}_{ \partial D}(\tau_0) }{\frac{\gamma\sqrt{Var_{ c}(\tau_0)}}{\sqrt{\Phi}}}.\end{align}
 \end{lem}\begin{proof}

 We will first consider  $ x\in S\cup B[0,z]$. To show (\ref{setB})  we will distinguish on the two different cases of graphs denoted as Case A and B, shown on figures 1 and 2 respectively.  
 
 For the Case A we have 
\begin{align}\label{inS}\nonumber P_{x}( B^{c})=&     P_x(\tau_0\geq t')\leq     P_{c}(\tau_0\geq t')=     P_{c}(\tau_0\geq \frac{\mathbb{E}_{ c}(\tau_0)+\frac{\gamma}{2}\sqrt{Var_{ c}(\tau_0)}}{\sqrt{\Phi}}) \\  \leq &P_{c}(\tau_0\geq \mathbb{E}_{ c}(\tau_0)+\frac{\gamma}{2}\sqrt{Var_{ c}(\tau_0)})\leq\frac{4}{\gamma^{2}}\end{align}
where above we used first that for $k$ large   $\sqrt{\Phi}\leq 1$ and then Chebyshev's inequality. 

We will show the same for every  $ x\in S\cup B[0,z]$ for graphs in  Case B. Define $\tau_1$ the time it gets to hit $[0,c]$ and $\tau_2$ the time it takes  to hit $0$ starting  from $X_{\tau_1}$. Then  
\begin{align}\label{inS1--}\nonumber P_{x}( B^{c})=&     P_x(\tau_0\geq t')=     P_{x}(\tau_0\geq \frac{\mathbb{E}_{ c}(\tau_0)+\frac{\gamma}{2}\sqrt{Var_{ c}(\tau_0)}}{\sqrt{\Phi}})\leq \\ \leq &P_{x}(\tau_1\geq\frac{\gamma}{4}\frac{ \sqrt{Var_{ c}(\tau_0)}}{\sqrt{\Phi}})+P_{x}(\tau_2\geq \frac{\mathbb{E}_{ c}(\tau_0)+\frac{\gamma}{4}\sqrt{Var_{ c}(\tau_0)}}{\sqrt{\Phi}})\end{align}
For the first term on the right hand side of (\ref{inS1--}) we can use Markov inequality to get 
\begin{align} \nonumber P_{x}(\tau_1\geq\frac{\gamma}{4}\frac{ \sqrt{Var_{ c}(\tau_0)}}{\sqrt{\Phi}}) \leq  \frac{4\mathbb{E}_{ x}(\tau_1)}{\frac{\gamma\sqrt{Var_{ c}(\tau_0)}}{\sqrt{\Phi}}}\end{align}
But for any  $x\in T_i$  $$\mathbb{E}_{ x}(\tau_1)\leq\mathbb{E}_{ \partial T_{i}}(n_{i})\leq\mathbb{E}_{ \partial D}(\tau_0) $$  because of (C3). This leads to 
\begin{align}\label{inS1--1} P_{x}(\tau_1\geq\frac{\gamma}{4}\frac{ \sqrt{Var_{ c}(\tau_0)}}{A}) \leq  \frac{4\mathbb{E}_{ \partial D}(\tau_0) }{\frac{\gamma\sqrt{Var_{ c}(\tau_0)}}{\sqrt{\Phi}}}\end{align}

For the second term on the right hand side of (\ref{inS1--}) we have
\begin{align}\label{inS1--2}\nonumber P_{x}(\tau_2\geq  \frac{\mathbb{E}_{ c}(\tau_0)+\frac{\gamma}{4}\sqrt{Var_{ c}(\tau_0)}}{\sqrt{\Phi}})\leq &P_{x}(\tau_2\geq \mathbb{E}_{ c}(\tau_0)+\frac{\gamma}{4}\sqrt{Var_{ c}(\tau_0)}) \\  \leq & P_{c}(\tau_0\geq\mathbb{E}_{ c}(\tau_0)+\frac{\gamma}{4}\sqrt{Var_{ c}(\tau_0)})  \leq  \frac{16}{\gamma^2}\end{align} 
  above we used  that $\sqrt{\Phi}<1$ for large $k$ and applied Chebyshev'´s inequality.  Finally, putting (\ref{inS1--1}) and (\ref{inS1--2}) in  (\ref{inS1--}) gives that for every $x\in S\cup B[0,z]$
\begin{align}\label{inS1} P_{x}( B^{c}) \leq &\frac{16}{\gamma^{2}}+\frac{4\mathbb{E}_{ \partial D}(\tau_0) }{\frac{\gamma\sqrt{Var_{ c}(\tau_0)}}{\sqrt{\Phi}}}\end{align}
 for graphs as in Case B.
From (\ref{inS}) and (\ref{inS1}) we obtain that in both Cases A and B, for every  $x\in S\cup B[0,z]$ one has
\begin{align}\label{inS1++} P_{x}( B^{c}) \leq &\frac{16}{\gamma^{2}}+\frac{4\mathbb{E}_{ \partial D}(\tau_0) }{\frac{\gamma\sqrt{Var_{ c}(\tau_0)}}{\sqrt{\Phi}}}\end{align}
For both Cases A and B, when  $x\in T_{0}$ we have 
\begin{align}\nonumber  P_{x}( B^{c})=     P_x(\tau_0\geq t')\leq       P_{\partial D}( \tau_0\geq t') \end{align}
For $k$ large so that ($\star$) holds, we can bound 
\begin{align}  \label{inT_0}P_{x}( B^{c}) \leq   P_{\partial D}(\tau_0\geq\mathbb{E}_{ \partial D}(\tau_0)+\delta\ \sqrt{Var_{ \partial D}(\tau_0)})\leq\frac{1}{\delta^{2}}\end{align}
where  above we  used  Chebyshev's  inequality. 
From (\ref{inS1++}) and (\ref{inT_0}) we obtain    (\ref{setB}) for every $x\in D\cup S$.
 \end{proof}
The main result about the upper bound of the mixing time follows.  
\begin{prop}\label{prop3.2} Assume  (C) and  that for large $k$   (H) and    \begin{align*}(\star) \ \ \ \ \ \frac{\mathbb{E}_{ c}(\tau_0)+\frac{\gamma}{2}\sqrt{Var_{ c}(\tau_0)}}{A}\geq\mathbb{E}_{ \partial D}(\tau_0)+\delta \sqrt{Var_{ \partial D}(\tau_0)}\end{align*}for some  $A\leq 1$. Then 
$$t^k_{mix}\leq\frac{\mathbb{E}_{ c}(\tau_0)+\gamma\sqrt{Var_{ c}(\tau_0)}}{A}$$
 \end{prop}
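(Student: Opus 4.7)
The plan is to bound $\max_{x}\|P^{t}(x,\cdot)-\pi\|$ at the candidate time $t=(\mathbb{E}_{c}(\tau_{0})+\gamma\sqrt{Var_{c}(\tau_{0})})/A$ by splitting $t=t'+\Delta t$, where $t'=(\mathbb{E}_{c}(\tau_{0})+\frac{\gamma}{2}\sqrt{Var_{c}(\tau_{0})})/A$ is precisely the threshold for which Lemma \ref{neolem} is stated and $\Delta t=\gamma\sqrt{Var_{c}(\tau_{0})}/(2A)$. The first piece is used to guarantee a visit to the root $0$, and the leftover time $\Delta t$ is used to mix locally around $0$, where condition (C1) tells us essentially all stationary mass lives.

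First I would apply Lemma \ref{neolem} to the event $B=\{\tau_{0}<t'\}$: for every starting state $x\in D\cup S$, $P_{x}(B^{c})\leq 16/\gamma^{2}+1/\delta^{2}$, and since $\gamma,\delta$ are increasing in $k$ this failure probability is $o(1)$. On $B$ the chain sits at $0$ at some random time $\tau_{0}\leq t'$ and still has at least $\Delta t$ steps ahead of it before time $t$, so it remains to bound the TV distance between $P^{t-\tau_{0}}(0,\cdot)$ and $\pi$ uniformly over $\tau_{0}\leq t'$.

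For this second piece I would follow the coupling technique of [P-S]: starting from $\tau_{0}$, couple the chain with a stationary copy by matching bottleneck excursions. Each full excursion from $\tau_{0}$ to $\tau^{*}_{z}$ contains $L$ returns to $T_{0}$, and each return consists of a $T_{0}$-excursion of length $\lambda_{i,j}$ and a $B[0,z]$-excursion of length $\theta_{i}$, so that its mean length is of order $\mathbb{E}[L]\,\mathbb{E}[\lambda_{i,j}+\theta_{i}]$. Hypothesis (H)(b) says precisely that this quantity is $\prec \gamma\sqrt{Var_{c}(\tau_{0})}$ (or $\prec\gamma\sqrt{Var_{c}(\tau_{0})}/A$ in the setting of Theorem \ref{theorem0}), so $\Delta t$ contains many such excursions, while (C2) in the form $P(L\leq z)\leq 1/h(z)$ ensures that each excursion contains many returns to $T_{0}$. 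Each return gives an independent chance for the $X$ and $Y$ copies to merge, so the residual TV distance between the $T_{0}$-marginal of $X_{t}$ and $\pi_{T_{0}}$ is $o(1)$; since $\pi(B[0,z]\cup S)\to 0$ by (C1), this is in turn $o(1)$-close to $\pi$.

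Putting the two bounds together gives $\|P^{t}(x,\cdot)-\pi\|\leq P_{x}(B^{c})+\sup_{s\leq t'}\|P^{t-s}(0,\cdot)-\pi\|=o(1)$, whence $t_{mix}(\epsilon)\leq t$ for every fixed $\epsilon>0$ and $k$ large. The main technical obstacle I expect is the bookkeeping of the excursion coupling in the second step: one has to verify that after sufficiently many returns to $T_{0}$ the position inside $T_{0}$ has become $\pi_{T_{0}}$-distributed up to an error summable against the number of excursions available in $\Delta t$, and it is here that (C3) plays its role, ensuring that the internal time scales of $T_{0}$ and of the $T_{i}$'s are compatible with the global excursion time used in (H)(b).
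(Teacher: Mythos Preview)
Your overall architecture matches the paper: set $t'=(\mathbb{E}_{c}(\tau_{0})+\frac{\gamma}{2}\sqrt{Var_{c}(\tau_{0})})/A$, invoke Lemma~\ref{neolem} to get $P_{x}(\tau_{0}\geq t')\leq 16/\gamma^{2}+1/\delta^{2}$, and then argue that the leftover $\Delta t=\frac{\gamma}{2A}\sqrt{Var_{c}(\tau_{0})}$ suffices for a coupling started at $0$. The discrepancy is entirely in how that coupling is carried out.

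The paper does \emph{not} use ``many excursions in $\Delta t$'' nor any argument that the position inside $T_{0}$ becomes $\pi_{T_{0}}$-distributed. Instead it runs $X$ and an independent stationary copy $Y$, and once $X$ reaches $0$ it switches to a \emph{level/mirror} coupling inside $T_{0}$: $Y$ moves toward or away from $0$ exactly when $X$ does. The single key event is $E=\{\tau^{*}_{\partial D}<\tau^{*}_{z}\}$; on $E\cap M$ (where $M=\{Y_{\tau_{0}}\in D\}$, which has probability $1-o(1)$ by (C1)), the walk $X$ sweeps every level of $T_{0}$ between $0$ and $\partial D$, so it necessarily meets $Y$'s level and the two coalesce by time $S=\inf\{s\geq\tau^{*}_{\partial D}:X_{s}=0\}$. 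The $L$ returns to $T_{0}$ are used only to force $P(E^{c}\mid L>z)\leq p^{z}$, since each return gives an independent probability $1-p$ of reaching $\partial D$; (C2) then bounds $P(L\leq z)\leq 1/h(z)$. Finally a \emph{single} application of Markov's inequality, using the Wald-type bound $\mathbb{E}[(S-\tau_{0})\mathcal{I}_{E}]\lesssim \mathbb{E}_{\partial D}(\tau_{0})+\mathbb{E}[L]\,\mathbb{E}[\lambda_{i,j}+\theta_{i}]$ together with (H)(b), shows $P(S-\tau_{0}>\Delta t)=o(1)$. Your phrase ``each return gives an independent chance to merge'' is therefore right in spirit but misidentifies the mechanism: the return gives a chance for $E$ to occur, and it is $E$ (not accumulated mixing) that forces coalescence via the level coupling.
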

\begin{proof}Denote $t=\frac{\mathbb{E}_{ c}(\tau_0)+\gamma\sqrt{Var_{ c}(\tau_0)}}{A}$. We will consider the following coupling. Assume $X_0=x $ and $Y_0\sim \pi$.
We let $X$ and $Y$ move independently until the first time $X$ hits $0$. Then they still continue both moving independently until the moment they collide or reach the same level at $T_0$. In this case the coupling changes to the following. $X$ keeps moving as an aperiodic random walk while $Y$ moves closer or further from $0$ if $X$ moves closer or further respectively. Define  $\tau$ to be the coupling time.

 Then  define     $E$ to be  the event that after hitting $0$ for the first time it reaches  $\partial D$ of $T_ 0$ before hitting $z$, i.e.  $$E=\{\tau^*_{\partial D}<\tau^*_{z}\} $$as well as the the events 
$$A_{L_z}= \{L_z>z\} . $$
If we define  $$S=inf\{s\geq \tau^*_{\partial D}:x_s=0\}$$
then on the  event   $E$, the quantity $S-\tau_0$ is   dominated by  
\begin{align*} \sum_{i=1}^{L_z}\theta_{i}+\sum_{i=1}^{ L_z}\sum_{j=1}^{G_i} \lambda_{i,j}+\zeta+\xi\end{align*} (see \cite{P-S}) where we recall 
 $(\theta_{i})_i$ are iid variables distributed as the length of a random walk on $B[0,z]$ starting from $0$ conditioning not to hit $z$ and $(\lambda_{i,j})$ are iid random variables distributed as the length of a random walk from $0$ on $T_0$ conditioning not to hit $\partial T_0$. $(G_i)_i$ is a random variable with  probability of success $\frac{\{\# j :j \sim 0, j\in B[0,z]\}}{\{\# j :j \sim 0\}}$  and $\xi$ is a random variable distributed as the commute  time between  $\partial T_0$ and $0$. If we use Wald's identity we obtain
\begin{align} \label{3.5} \mathbb{E}[(S-\tau_0)\mathcal{I}_E] \leq & \mathbb{E}[ L_z]\mathbb{E}[\theta_i]+\mathbb{E}[ L_z]  \mathbb{E}[G_i] \mathbb{E}[\lambda_{i,j}] +\mathbb{E}[\zeta]  +\mathbb{E}[\xi]  \lesssim \\ \lesssim \nonumber & \mathbb{E}_{ \partial D}(\tau_0)+\mathbb{E}[ L_z]  \mathbb{E}[\lambda_{i,j}]+ \mathbb{E}[ L_z]\mathbb{E}[\theta_i]    \end{align}
where above   we used (C3) for  $\mathbb{E}[\xi] = \mathbb{E}_{ \partial D}(\tau_0)$ and $\mathbb{E}[\zeta]=\mathbb{E}_0(\tau_{ \partial D})$.

We compute
 \begin{align*} P(\tau>t)  \leq  P(\tau>t, A_{ L})+\frac{1}{h(z)} \leq P(\tau>t, A_{ L_z}, E)+\frac{1}{h(z)}+p^{z} \end{align*}
where in the first inequality  we used  (C2) and in the second that $P(E^c \mid L_z)\leq p^{L_z}$. Then we obtain
 \begin{align*} P(\tau>t) & \leq   P(\tau>t, A_{ L_z}, E)+\frac{1}{h(z)}+p^{z} \end{align*}
 If we use Lemma \ref{neolem} the last one can be bounded by
 \begin{align}\label{lastref1} P(\tau>t) & \leq   P(\tau>t, A_{ L_z}, E,B)+\frac{16}{\gamma^{2}}+\frac{1}{\delta^{2}}+\frac{4\mathbb{E}_{ \partial D}(\tau_0) }{\frac{\gamma\sqrt{Var_{ c}(\tau_0)}}{A}}+\frac{1}{h(z)}+p^{z} \end{align}
Now define $F=\{S-\tau_0>\frac{\frac{\gamma}{2}\sqrt{Var_{ c}(\tau_0)}}{A}\}$ and  $M=\{Y_{\tau_0} \in D\}$.  Then  $P(M^c)=o(1)$ as   $k\rightarrow \infty$ since at time  $\tau_0$ the random walk   $Y$ is stationary, and so, because of (C1) we have that   the  stationary probability of  $T_{0}$ is $1-o(1)$. We observe that on the events $E$ and $M$ the two walks $X$ and $Y$ must have  coalesced by time $S$. Therefore
$$B\cap F^c\subset \{\tau<t\}$$
  This implies that
 \begin{align*}   P(\tau>t, A_{ L_z}, E,B)&\leq P(\tau>t, A_{ L_z}, E,M,B)+o(1)=P(\tau>t,A_{ L_z}, E,M,B,F)+o(1)
\leq \\ &\leq P(E,F)+o(1)=   P(E,\{S-\tau_0>\frac{\frac{\gamma}{2}\sqrt{Var_{ c}(\tau_0)}}{A}\}) +o(1) \end{align*}
 From Markov  inequality
 \begin{align*}   P(\tau>t, A_{ L_z}, E, B)\leq \frac{ 2\mathbb{E}[(S-\tau_0)\mathcal{I}_{ E}]}{\frac{\gamma\sqrt{Var_{ c}(\tau_0)}}{A}} +o(1)  \end{align*}
 If we now use  (\ref{3.5})  we obtain
 \begin{align}\label{lastref2}   P(\tau>t, A_{ L_z}, E,B)\leq2 \frac{ \mathbb{E}_{ \partial D}(\tau_0)+\mathbb{E}[ L_z]  \mathbb{E}[\lambda_{i,j}]+ \mathbb{E}[ L_z]\mathbb{E}[\theta_i]  }{\frac{\gamma\sqrt{Var_{ c}(\tau_0)}}{A}}  +o(1) \end{align}
 Combining together (\ref{lastref1}) and (\ref{lastref2})
we get \begin{align*}   P(\tau>t)\leq &2\frac{ \mathbb{E}_{ \partial D}(\tau_0)+\mathbb{E}[ L_z]  \mathbb{E}[\lambda_{i,j}]+ \mathbb{E}[ L_z]\mathbb{E}[\theta_i]  }{\frac{\gamma\sqrt{Var_{ c}(\tau_0)}}{A}} +\frac{16}{\gamma^{2}}+\frac{1}{\delta^{2}}+\frac{4\mathbb{E}_{ \partial D}(\tau_0) }{\frac{\gamma\sqrt{Var_{ c}(\tau_0)}}{A}}+\\ & +\frac{1}{h(z)}+p^{z}+o(1)  \end{align*}
If one takes  under  account that \begin{align*}    \max_x \vert \vert P^t(x,\dot ) - \pi \vert \vert \leq P(\tau>t)  \end{align*}
(see \cite{L-P-W}) we eventually obtain 
 \begin{align*} \max_x \vert \vert P^t(x,\dot ) - \pi \vert \vert\leq &2\frac{ \mathbb{E}_{ \partial D}(\tau_0)+\mathbb{E}[ L_z]  \mathbb{E}[\lambda_{i,j}]+ \mathbb{E}[ L_z]\mathbb{E}[\theta_i]  }{\frac{\gamma\sqrt{Var_{ c}(\tau_0)}}{A}} +\frac{16}{\gamma^{2}}+\frac{1}{\delta^{2}}+\frac{4\mathbb{E}_{ \partial D}(\tau_0) }{\frac{\gamma\sqrt{Var_{ c}(\tau_0)}}{A}}+\\ & +\frac{1}{h(z)}+p^{z} +o(1)   \end{align*}
 Because of (H)(b), the fact that $h(z),z\rightarrow \infty$ as $k\rightarrow \infty$ (recall $p<1$)    and that $A<1$ for large $k$, we obtain   $t_{mix}\leq\frac{\mathbb{E}_{ c}(\tau_0)+\gamma\sqrt{Var_{ c}(\tau_0)}}{A}$.
 \end{proof}
 \section{Paradigms}\label{sec4}
 In this section we present examples for the two main theorems. At first in paradigm 1 an example that satisfies the conditions of Theorem \ref{theorem1} is presented. Then in sections \ref{secpar2} and \ref{secpar3}, two examples that satisfy the conditions of Theorem \ref{theorem0} are presented.  From these last two, at the first one,  paradigm 2, we consider a graph with 
$$\mathbb{E}_{ \partial D}(\tau_0)\leq\sqrt{Var_{ \partial D}(\tau_0)}$$  while at the second one, paradigm 3, a graph with 
$$\mathbb{E}_{ \partial D}(\tau_0) > \sqrt{Var_{ \partial D}(\tau_0)}.$$ 
For both examples we establish $\sqrt{\Phi}^{-1}\rightarrow\infty$ as $k\rightarrow \infty$.
\subsection{Paradigm 1}\label{secpar1} This is the example presented in \cite{P-S}. 

We will show that condition  (H1) is satisfied. Since in the graph presented one has 
$$ \sqrt{Var_{ \partial D}(\tau_0)}\lesssim\ \mathbb{E}_{ \partial D}(\tau_0)\text{ \ and \ }  \sqrt{Var_{ c}(\tau_0)}\lesssim\ \mathbb{E}_{ c}(\tau_0)  $$ (see \cite{P-S})  it is sufficient to show that  
\begin{align}\label{examp1.1} \mathbb{E}_{ \partial D}(\tau_0)\lesssim\sqrt{Var_{ c}(\tau_0)} .\end{align}
Since $N$ is the size  of binary tree $T_0$, one has that $N=\sum_{j=0}^{l}2^j $, were $l$ is the number of levels of $T_0$, i.e. $l=\log(N-1)-1$.  Concerning (\ref{examp1.1}), one can think of the random walk from the leafs $ \partial T_{0}$ of the binary tree to the origin $0$ as a walk from $0$ to $l$ on $[0,l]\cap \mathbb{Z}$, with a reflective boundary at $0$ and probabilities $\frac{2}{3}$ and $\frac{1}{3}$ towards the left and the right respectively at any other point of the line. Since, for this  one dimensional random walk   the hitting time satisfies
$$l^2\lesssim\mathbb{E}_{ 0}(\tau_l)\lesssim l^{3}$$
we obtain for $T_0$ that
\begin{align}\label{examp1.2}l^2\asymp(\log(N-1)-1)^{2}\lesssim\mathbb{E}_{ \partial D}(\tau_0)\lesssim(\log(N-1)-1)^{3}\asymp l^3.\end{align}
On the other  hand we know that (see \cite{P-S}) 
\begin{align}\label{examp1.3}\sqrt{Var_{ c}(\tau_0)}\asymp N\sqrt{k}.\end{align}
From (\ref{examp1.2}) and (\ref{examp1.3}) inequality (\ref{examp1.1}) follows for appropriately large $N=n_k^3$ so that $$(\log(N-1)-1)^{3}\lesssim N\sqrt{k}$$
or if we substitute  $N=n_k^3$ 
$$n_k^3< e^{n_k k^\frac{1}{6}+1}$$
which is true for $n_k$ sufficiently large. The rest of the conditions are easily verified directly from \cite{P-S}.

\subsection{Paradigm 2}\label{secpar2}
We will construct a graph based  partly on the graph $\mathcal{T}$ presented in \cite{P-S}. Let $n_k=2^{2^k}$ and consider the line $[0,n_k+1]$. Then for all $j \in \{[\frac{k}{2}],...,k\}$ place   a binary tree at distance $n_j$ from the origin consisting of $\frac{N}{ n_j}$ vertices. We denote this construction as $  G_0$. In this way, the part of $G_0$ contained between  $[0,n_k]$, i.e. $G_{0}\smallsetminus\{n_{k}+1\}$, is equal with  $\mathcal{T}\smallsetminus T_{0}$ from \cite{P-S}, where $T_0$ is the big binary  tree at $0$ of $\mathcal{T}$. Then consider $r$ identical copies of $G_0$ and glue them together at $0$ and $n_k+1$ as shown on figure \ref{parad2}. We also consider $q$ copies of a line $[-m,0]$ and connect them with the previous construction at $0$ and together at $-m$. In this way, we can consider $T_0$ to be the part of graph between $[-m,0]$ with $\partial T_{0}=-m$, the bottleneck $B[0,z]$ to be the part of the graph between $0$ and $n_{[\frac{k}{2}]}$, while $S$ is the part between $n_{[\frac{k}{2}]}$ and $n_k+1$. In this way  $c=n_k+1$ and $z=n_{[\frac{n_k}{2}]}$.
 \begin{figure} 
\begin{center}\includegraphics[width=12.25cm,height=5.25cm]{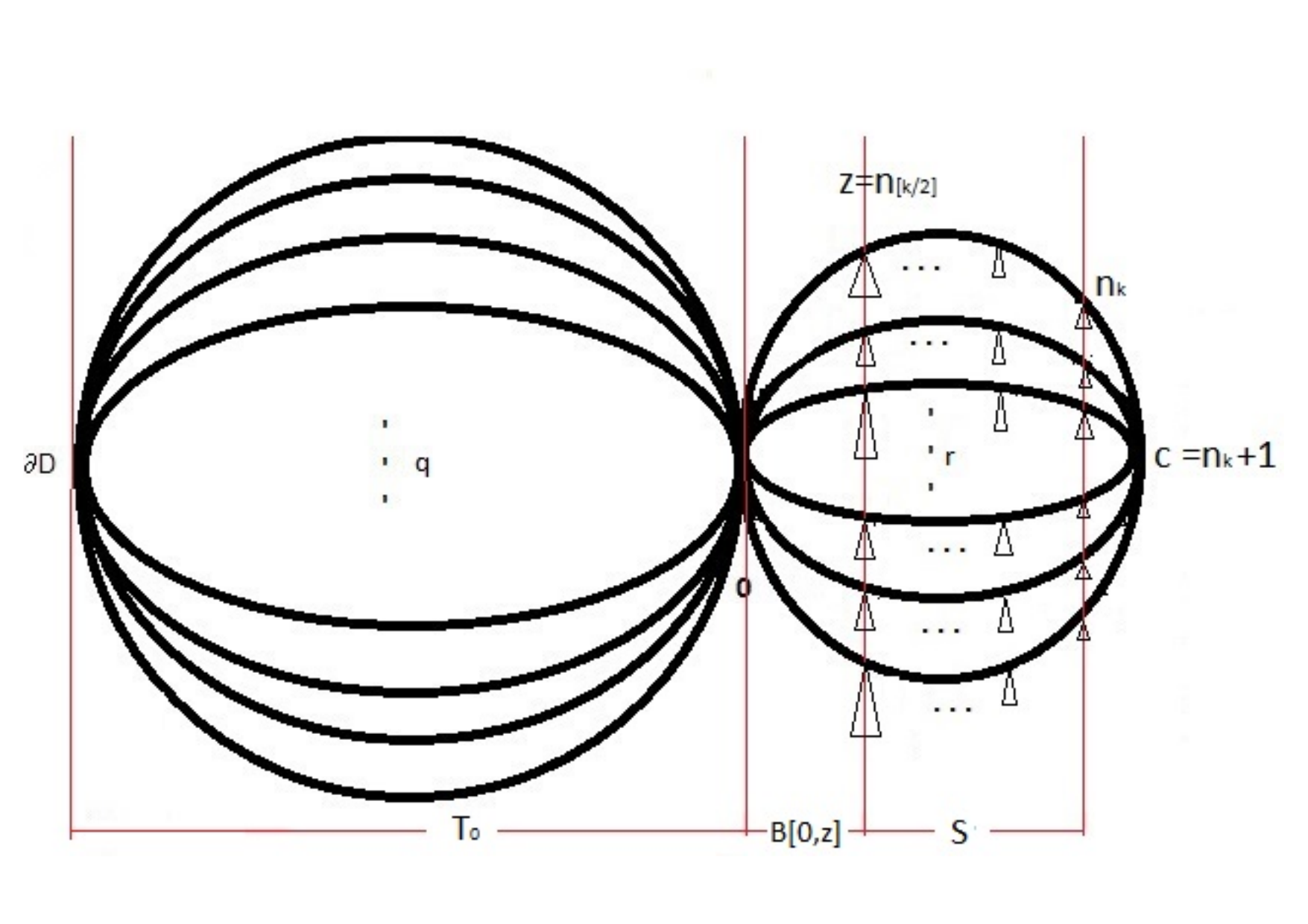}
\caption[paradigm2]{Paradigm 2}
\end{center}\label{parad2}
\end{figure}
 
We will determine $m,r,q,\gamma,\delta$ so that the conditions of Theorem \ref{theorem0b} hold.
But first, we will place conditions on the graph so that condition (H1) of Theorem \ref{theorem1} is not true. If  $X$ is a simple random walk on the interval $[-m,0]$ staring from $-m$,  then the mean and the variance of the time it gets to reach  $0$ from $m$ for the first time are $\mathbb{E}_{ -m}(\tau_0)=m^2$ and $Var_{ -m}(\tau_0)=m^4$ respectively.   Since  $D$ consists of $q$ lines $[-m,0]$ connected on $0$ and $-m$, and when at $-m$ we move at any of the $q$ branches $[-m,0]$ with the same  probability $\frac{1}{q}$ we obtain that 
$$\mathbb{E}_{ \partial D}(\tau_0)=m^{2} \text{\ \ \ \ and \ \ \ \ } Var_{ \partial D}(\tau_0)=m^4.$$
From \cite{P-S} we know that  
\begin{align*}\sqrt{Var_{ c}(\tau_0)}\asymp N\sqrt{k} \text{ \ \ \ \ \  and \ \   \ \ \ }\mathbb{E}_{ c}(\tau_0) =6Nk+o(N\sqrt{k})\asymp6Nk
.\end{align*}
If one chooses
\begin{align}\label{examp2not}m^{2} =6Nk
\end{align}
 then (H1) is not true since $\delta\sqrt{Var_{ \partial D}(\tau_0)}>\mathbb{E}_{ c}(\tau_0)$ for any $\delta= k^ t$ with $t>0$, and $k$ large. 
 We also notice that for   any $\delta= k^ t$ with $t>0$  $$\mathbb{E}_{ \partial D}(\tau_0)\prec\delta\sqrt{Var_{ \partial D}(\tau_0)}.$$
  Furthermore,  for any  $\gamma= k^ p$ with $p<\frac{1}{2}$ we get
\begin{align}\gamma\sqrt{Var_{ c}(\tau_0)}\prec\mathbb{E}_{ c}(\tau_0).\label{examp2.1}\end{align}
In addition,  from (\ref{examp2.1}) we get that
\begin{align}\label{examp2.2} t_S=\mathbb{E}_{ c}(\tau_0)-\gamma\sqrt{Var_{ c}(\tau_0)}\prec6Nk
.\end{align}
At first we  will show  that for appropriate $r>0$ one can obtain   $\sqrt{\Phi}<\frac{1}{k^{s}}$ for some $s>0$. 

Thus,  it suffices to show that $\Phi=t_{S}\sum_{x\in S}\sum_{y \notin S}\mu_S (x)P(x,y)<\frac{1}{k^{2s}}$.
For that, we  compute 
\begin{align}\label{examp2.3} \sum_{x\in S}\sum_{y \notin S}\mu_S (x)P(x,y)=\frac{q}{(q+r)\left\vert  S \right\vert}
\end{align}
where the   size  of $S$ is 
\begin{align}\label{examp2.4} \left\vert  S\right\vert=r(n_{k}+1+N(\sum_{j\in\{[k/2],...,k\}}\frac{1}{n_{j}}))\asymp rN( \sum_{j\in\{[k/2],...,k\}}\frac{1}{n_{j}}).\end{align}
Combining together (\ref{examp2.2}), (\ref{examp2.3}) and (\ref{examp2.4}) we get
 $$\Phi\asymp\frac{6Nkq( \sum_{j\in\{[k/2],...,k\}}\frac{1}{n_{j}})^{-1}}{(q+r)rN} .$$
So, if we   choose 
\begin{align}\label{examp2.5}r=12k^{2s+1} ( \sum_{j\in\{[k/2],...,k\}}\frac{1}{n_{j}})^{-1}\end{align} we obtain $\sqrt{\Phi}\downarrow 0$
as  $k\rightarrow \infty$. We will now determine parameters  so that conditions (C) are satisfied.  
For  (C1) we compute 
$$ \pi (B[0,z]\cup S)=\frac{\left\vert  S\right\vert+\left\vert  B[0,z]\right\vert  }{\left\vert  S\right\vert+\left\vert  B[0,z]\right\vert+\left\vert  T_{0}\right\vert} .$$ For this to vanish as $  k\rightarrow \infty $ we need
 \begin{align}\label{examp2.6-1}\left\vert  S\right\vert+\left\vert  B[0,z]\right\vert\prec\left\vert   T_{0}\right\vert.\end{align}
 But $\left\vert  S\right\vert+\left\vert  B[0,z]\right\vert\asymp rN( \sum_{j\in\{[k/2],...,k\}}\frac{1}{n_{j}})$ and $\left\vert  T_{0}\right\vert\asymp q(m-2)$. 
For  (C1) to hold true we need $q$ large  enough so that 
\begin{align*}\frac{rN}{m-2}( \sum_{j\in\{[k/2],...,k\}}\frac{1}{n_{j}})\prec\ q. \end{align*}
   If we replace $r$ by (\ref{examp2.5}) and  $m$ by (\ref{examp2not}) we derive
    
\begin{align}\label{examp2.6}\frac{12k^{2s+1}N}{\sqrt{6Nk}-2}\prec\ q. \end{align}
   For the condition (C2), we recall that $L_{n_{[k/2]}}=\sum_{s=\tau_0}^{\tau^*_{n_{[k/2]}}}\mathcal{I}(x_{s-1}\notin T_0,x_s \in T_0)$. Then, since we move to every branch of the bottleneck with the same probability and the $r$ branches $[0,n_{[k/2]}]$ are identical,  $L$ has the geometric distribution with parameter $\frac{1}{n_{[k/2]}}$.  This leads to
\begin{align*} 
(C2) \  \  \   \  \  \  \  \  \ P(L_{n_{[k/2]}}\leq n_{[k/2]})\leq\frac{1}{  n_{[k/2]}}
.\end{align*} 
Concerning (C3), the first assertion $\mathbb{E}_0(\tau_{ \partial D})\leq \mathbb{E}_{ \partial D}(\tau_0)$ is trivially true with an equality since $T_0$ is by construction symmetric.   It remains to determine conditions for  (H)(b)  and the modification of condition (H2) of Theorem \ref{theorem0b}. For (H)(b) we first notice that     \begin{align}\label{reduce}   \mathbb{E}[\theta_i]&\lesssim\ \mathbb{E}[\lambda_{i,j}] . \end{align}
  Since  $(\theta_{i})_i$ are distributed as the length of a random walk on $r$ identical lines, with equal probability, of length  $z$ conditioning   not to hit $z$ and $(\lambda_{i,j}{i,j})$ are iid random variables distributed as the  length of a random walk on $q$ identical lines, with equal probability, of length $m$ conditioning not to hit $m$,  the last inequality is true if and only if 
\begin{align}\label{examp2.7}m\geq z=n_{[\frac{k}{2}]}\end{align}
which is true from (\ref{examp2not}). Because of (\ref{reduce}), (H)(b) is reduced to  
\begin{align}\label{examp2.8}
      \mathbb{E}_{ \partial D}(\tau_0)+\mathbb{E}[ L]  \mathbb{E}[\lambda_{i,j}]\prec\frac{\gamma\sqrt{Var_{ c}(\tau_0)}}{\sqrt{\Phi}}.\end{align}
We can bound the left hand side by $$\mathbb{E}_{ \partial D}(\tau_0)+\mathbb{E}[ L]  \mathbb{E}[\lambda_{i,j}]\asymp m^{2}+mn_{[\frac{k}{2}]}\lesssim m^{2}=6Nk$$  where above  we made use of   (\ref{examp2.7}) and (\ref{examp2not}). 
So, for   (\ref{examp2.8}) to be true we need
\begin{align*}6Nk\prec\frac{\gamma N\sqrt{k}}{\sqrt{\Phi}}.\end{align*}
Since $\sqrt{\Phi}\leq \frac{1}{k^s}$ the last one is satisfied for every $s$ such that  $\frac{1}{2}-p< s$
which is true for every $s\geq 1$. Finally for (H2) we need for large $k$\begin{align*} \mathbb{E}_{ c}(\tau_0)\leq\delta\  \sqrt{Var_{ \partial D}(\tau_0)}\leq \frac{\mathbb{E}_{ c}(\tau_0) }{\sqrt{\Phi}}\end{align*}
which is trivially  true for any  $t\leq s$.
  
\subsection{Paradigm 3} \label{secpar3} 
\begin{figure}
\begin{center} \includegraphics[width=12.25cm,height=5.25cm]{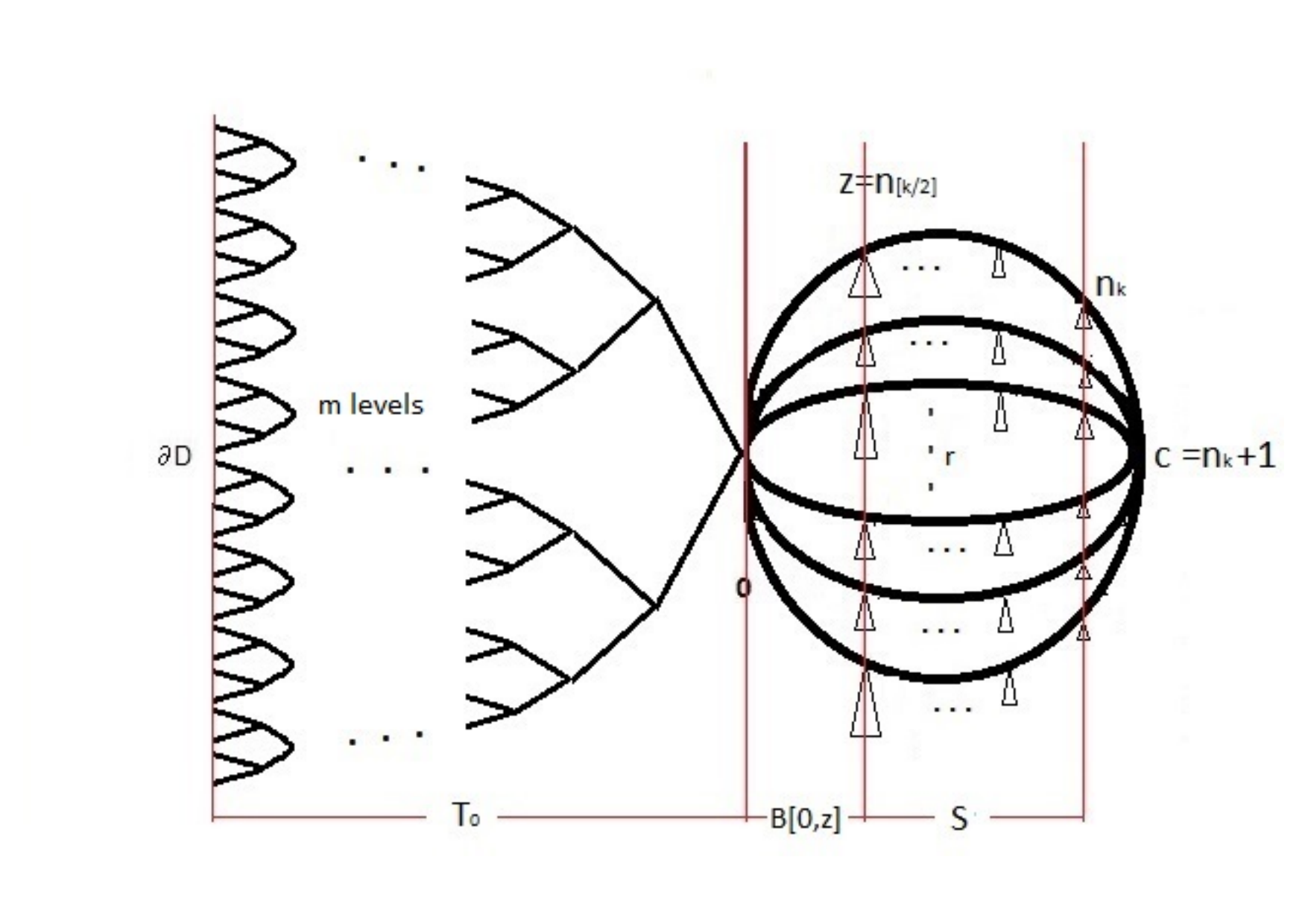}
\caption[paradigm3]{Paradigm 3}
\end{center}\label{parad3}
\end{figure}
We will  construct  a graph based  again on the example presented in \cite{P-S}. For $T_0$ we will consider a binary tree of size $M$. The remaining  part of the graph $B[0,z]\cup S$    will be the same with that of paradigm 2, as shown on  figure \ref{parad3}. 
 
We will  determine $r,\gamma,\delta,m, M$ so that the conditions of Theorem \ref{theorem0b} hold.
As in the previous example we will start by placing conditions so that (H1) of Theorem \ref{theorem1} is not true. Since, in a binary tree  
\begin{align}\label{bintree} \sqrt{Var_{ \partial D}(\tau_0)}\prec \mathbb{E}_{ \partial D}(\tau_0)\end{align}
it is sufficient to have   
 \begin{align}\label{examp3not2} \mathbb{E}_{ \partial D}(\tau_0)> \mathbb{E}_{ c}(\tau_0)=6Nk. \end{align}
Since from (\ref{examp1.2})
we know that   \begin{align*}  m^{2} \lesssim\mathbb{E}_{ \partial D}(\tau_0)  \end{align*}
we can choose 
\begin{align}\label{examp3not}  m >\sqrt{Nk}. \end{align}
As in paradigm 2, since the part $B[0,z]\cup S$ is common in the two examples, condition (C2) and       
$$\gamma\sqrt{Var_{ c}(\tau_0)}\prec\mathbb{E}_{ c}(\tau_0)$$ 
   for $\gamma= k^ p$ with $p<\frac{1}{2}$ are satisfied. Similarly, 
   \begin{align}\label{examp3.1} t_S\prec6Nk
.\end{align}
We will determine $s>0$ such that  $\sqrt{\Phi}\leqslant n_k^{-s}$.  Similarly to the previous example,  it suffices to show that $\Phi=t_{S}\sum_{x\in S}\sum_{y \notin S}\mu_S (x)P(x,y)<n_k^{-2s} $.
 To determine parameters for $\Phi<n_k^{-2s} $  we compute 
\begin{align}\label{examp3.2} \sum_{x\in S}\sum_{y \notin S}\mu_S (x)P(x,y)=\frac{2}{(2+r)\left\vert  S \right\vert}
\end{align}
where the   size of $S$ is 
\begin{align}\label{examp3.3} \left\vert  S\right\vert\asymp rN( \sum_{j\in\{[k/2],...,k\}}\frac{1}{n_{j}}).\end{align}
Combining together (\ref{examp3.1}), (\ref{examp3.2}) and (\ref{examp3.3}) we get
 $$\Phi\asymp\frac{6Nk2( \sum_{j\in\{[k/2],...,k\}}\frac{1}{n_{j}})^{-1}}{(2+r)rN}\leq \frac{6 k( \sum_{j\in\{[k/2],...,k\}}\frac{1}{n_{j}})^{-1}}{r}.$$
 We can then choose  \begin{align}\label{examp3.4}r=6kn_{k}^{2s}( \sum_{j\in\{[k/2],...,k\}}\frac{1}{n_{j}})^{-1}.\end{align} Furthermore, we can request for $s$ to be sufficiently large so that 
$$n_{k}^{s}>\frac{m^{3}}{6kN}>\frac{\sqrt{Nk}}{6Nk}=\frac{1}{6}k^{1/2}n_{k}^{1/2}$$
for large $k$. From the last one and the right hand side of  (\ref{examp1.2}) $\mathbb{E}_{ \partial D}(\tau_0)<m^3$, we obtain      
 \begin{align}\label{examp3not3}\frac{\mathbb{E}_{ c}(\tau_0) }{\sqrt{\Phi}}>\mathbb{E}_{ \partial D}(\tau_0) . \end{align}
Concerning  (C1) we require (\ref{examp2.6-1}): $\left\vert  S\right\vert+\left\vert  B[0,z]\right\vert\prec\left\vert   T_{0}\right\vert$.
 We have  
 $$\left\vert  S\right\vert+\left\vert  B[0,z]\right\vert\asymp rN( \sum_{j\in\{[k/2],...,k\}}\frac{1}{n_{j}})  =6n_{k} ^{2s}kN $$
 where above we used (\ref{examp3.4}).   Since  the size of he binary tree  $T_0$ is  $\vert T_0\vert =M$ for  (\ref{examp2.6-1}) we need $M\asymp e^{m}$, i.e. $m$, large enough
so that    \begin{align*} n_{k} ^{2s}kN\prec M
.\end{align*}
 Concerning (C3), both assertions are  trivially true. For inequality (H)(b) we can compute  
 \begin{align*}      \mathbb{E}_{ \partial D}(\tau_0)+\mathbb{E}[ L]  \mathbb{E}[\lambda_{i,j}]+\mathbb{E}[ L]  \mathbb{E}[\theta_{i}]\prec m^{3}  \end{align*}   and $$\frac{\gamma\sqrt{Var_{ c}(\tau_0)}}{\sqrt{\Phi}}=n_{k} ^{s}k^{p+\frac{1}{2}}N.$$ From the last two we obtain (H)(b) for $s$ large enough such that $m^{3}<n_{k} ^{s}k^{p+\frac{1}{2}}N$. 
 It remains to show the following two inequalities. 
$$\mathbb{E}_{ \partial D}(\tau_0)\prec\delta\ \sqrt{Var_{ \partial D}(\tau_0)} $$  
and  
$$ \mathbb{E}_{ c}(\tau_0)\leq\delta\  \sqrt{Var_{ \partial D}(\tau_0)}\leq \frac{\mathbb{E}_{ c}(\tau_0) }{\sqrt{\Phi}}.$$
However, since    (\ref{bintree}), (\ref{examp3not2}) and (\ref{examp3not3})  hold, for  the last two inequalities to be true we need to choose $\delta$ in the following range
$$\frac{\mathbb{E}_{ \partial D}(\tau_0)}{\ \sqrt{Var_{ \partial D}(\tau_0)}}<\delta<\frac{\mathbb{E}_{ c}(\tau_0) }{\sqrt{\Phi}\  \sqrt{Var_{ \partial D}(\tau_0)}}.$$


\begin{thebibliography}{99}
\footnotesize

\bibitem{L-P-W}{\sc  Levin D. A.,   Peres Y. and   Wilmer E. L.} (2009)
{\em Markov chains and mixing times},
American Mathematical Society, Providence, RI.
\bibitem{P-S}{\sc  Peres,Y. and  Sousi P.} (2015)
{\em Total variation cutoff in a tree}.
Ann. Fac. Sci. Toulouse Math, {\bf 4,} 763-779.






\end{thebibliography}
\end{document}